\newtheorem{theorem}{Theorem}
\newtheorem{lemma}[theorem]{Lemma}
\newtheorem{remark}[theorem]{Remark}
\def\arrow{\rightarrow}
\def\C{\mathbb{C}}
\def\Cm{\mathbb{C}^m}
\def\Cn{\mathbb{C}^n}
\def\Linfty{L^{\infty}(\Omega)}
\def\Lo{L^2(\Omega)}
\def\Lao{L^2_a(\Omega)}
\def\Hinfty{{H^\infty(\Omega)}}
\def\Ao{A(\Omega)}
\def\row#1#2{#1_1,\ldots,#1_{#2}}
\def\ol{\overline}
\def\Afb{A[\row {\ol f}m]}
\def\Hfb{\Hinfty[\row {\ol f}m]}
\def\ma{\mathfrak{M}_A}
\def\mb{\mathfrak{M}_B}
\def\maf{\mathfrak{M}_{A[\row {\ol f}m]}}
\def\mh{\mathfrak{M}_{\Hinfty}}
\def\mo{\mathfrak{M}_{\Ao}}
\def\E{{\mathscr E}}
\def\myhat{\widehat}
\def\what{\widehat}
\def\sm{\setminus}
\def\tg{\widetilde g}
\begin{document}

\title[A uniform algebra approach to a theorem of Sahuto\u glu and Tikaradze]{A uniform algebra approach\\ to an approximation theorem of Sahuto\u glu and Tikaradze}

\author{Timothy G. Clos}
\address{Department of Mathematical Sciences, Kent State University, Kent, OH 44242}
\email{tclos@kent.edu}
\author{Alexander J. Izzo}
\address{Department of Mathematics and Statistics, Bowling Green State University, Bowling Green, OH 43403}
\email{aizzo@bgsu.edu}
\thanks{The second author was supported by NSF Grant DMS-1856010.}
\date{\today}


 \maketitle

\begin{abstract}
Using methods from the theory of uniform algebras, we give a simple proof of an approximation result of Sahuto\u glu and Tikaradze with $L^\infty$-pseudoconvex domains replaced by the open sets for which Gleason's problem is solvable.
\end{abstract}

\section{The Results}

In \cite{ST} 
S\"onmez Sahuto\u glu and Akaki Tikaradze proved, on what they referred to as $L^\infty$-pseudoconvex domains, an approximation result that can be viewed as a several complex variables generalization of a weak form of an earlier approximation result in one complex variable due to Christopher Bishop\footnote{Christopher Bishop should not be confused with Errett Bishop after whom the antisymmetric decomposition, which will appear later in our paper, is named.} \cite{CBishop}.  They used their approximation result to give a generalization to several complex variables of a theorem of Sheldon Axler, \v Zeljko \v Cu\v ckovi\v c, and Nagisetti Rao regarding commuting Toeplitz operators \cite{acr}.  The main purpose of the present paper is to give a simple proof of the approximation result of Sahuto\u glu and Tikaradze, under a different hypothesis on the underlying domain, using methods from the theory of uniform algebras.

We introduce here some notation and terminology we will use.  Throughout the paper, $\Omega$ will be an open set in $\Cn$ or the in the Riemann sphere.  The boundary of $\Omega$ will be denoted by $b\Omega$.
Following \cite{ST}, 
given a holomorphic map $f:\Omega\arrow \Cm$ we will denote by $\Omega_{f,\lambda}$ the set of all nonisolated points of $f^{-1}(\lambda)$ and we set 
$\Omega_f=\bigcup_{\lambda\in \Cm}\Omega_{f,\lambda}$.
For a compact space $X$,
we denote by
$C(X)$ the algebra of all continuous complex-valued functions on $X$.
A \emph{uniform algebra} on $X$ is a supremum norm closed subalgebra of $C(X)$ that contains the constant functions and separates
the points of $X$.  In particular, a uniform algebra is a commutative Banach algebra.
We will denote the maximal ideal space of a commutative Banach algebra $A$ by $\ma$.  Given $x\in A$ we will denote the Gelfand transform of $x$ as usual by $\myhat x$.  If $A$ is a Banach algebra of continuous complex-valued functions on a subset of $\Cn$ and the complex coordinate functions $\row zn$ belong to $A$, we will let $\pi_A:\ma\arrow\Cn$ denote the map given by $\pi_A(x)=\bigl(\myhat z_1(x),\ldots, \myhat z_n(x)\bigr)$.  
As usual $\Hinfty$ will denote the algebra of bounded holomorphic functions on $\Omega$ equipped with the supremum norm.   If $A$ is an algebra of bounded continuous complex-valued functions on
$\Omega$ and $\row fm$ are bounded continuous complex-valued functions on $\Omega$, we will denote by $A[\row fm]$ the norm closed subalgebra of $\Linfty$ generated by $A$ and the functions $\row fm$.  This last notation, which is rather standard, differs from the notation in \cite{ST} in that in \cite{ST} the notation $A[\row fm]$ is used to denote the algebra generated by $A$ and $\row fm$ without taking  closure.

In the terminology of Sahuto\u glu and Tikaradze, an $L^\infty$-pseudoconvex domain is a pseudoconvex domain on which the $\ol\partial$ problem is solvable in $L^\infty$.
(See \cite{ST} for the precise definition.)
The approximation theorem of Sahuto\u glu and Tikaradze referred to above is the following.

\begin{theorem}[\cite{ST}, Theorem~1]\label{STapprox}
Let $\Omega$ be a bounded $L^\infty$-pseudoconvex domain in $\Cn$ and let $f_j\in\Hinfty$ for $j=1,\ldots, m$.  Set $f=(\row fm)$.  Suppose that $g\in C(\ol \Omega)$ satisfies $g|_{b\Omega\cup \Omega_f}= 0$.  Then $g$ is in $\Hfb$.
\end{theorem}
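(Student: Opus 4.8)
\noindent\emph{Proposed proof.} The plan is to treat $\Hfb$ as a uniform algebra and apply Bishop's antisymmetric decomposition. Put $X=\mathfrak{M}_{\Linfty}$, identify $\Linfty$ with $C(X)$, so that $\Hfb$ becomes a closed subalgebra of $C(X)$ containing the constants, and let $\pi\colon X\to\ol\Omega$ be the map determined by the coordinate functions $z_1,\dots,z_n$. Every element of $\Hfb$ is, on $\Omega$, a uniform limit of polynomials in holomorphic functions and in the $\ol f_j$, hence is continuous on $\Omega$; and a short computation with the coordinate functions shows that if $x\in X$ lies over $q\in\ol\Omega$ and $h\in\Linfty$ is continuous at $q$, then $\what h(x)=h(q)$. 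Consequently $\what h(x)=h(\pi(x))$ for every $x\in X$ and every $h\in\Hfb$, and likewise $\what g(x)=g(\pi(x))$ since $g\in C(\ol\Omega)$; in particular $\what g$ vanishes on $\pi^{-1}(b\Omega\cup\Omega_f)$, because $g$ does on $b\Omega\cup\Omega_f$.

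By Bishop's theorem it suffices to show $\what g|_E\in\Hfb|_E$ for every maximal set of antisymmetry $E$ of $\Hfb$. Since $\operatorname{Re}f_j=\tfrac12(f_j+\ol f_j)$ and $\operatorname{Im}f_j$ are real-valued members of $\Hfb$, they are constant on $E$; hence $f$ is constant on $E$, say $f\equiv\lambda$, and then $\pi(E)\cap\Omega\subseteq f^{-1}(\lambda)$ by the previous paragraph. If $\pi(E)\subseteq b\Omega\cup\Omega_f$ then $\what g|_E=0\in\Hfb|_E$, and we are done. Otherwise choose $p\in\pi(E)\cap(\Omega\sm\Omega_f)$: then $p\in f^{-1}(\lambda)$, and since $p\notin\Omega_{f,\lambda}$ the point $p$ is isolated in $f^{-1}(\lambda)$.

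It now suffices to prove that $E\subseteq\pi^{-1}(p)$, for then $\what g|_E\equiv g(p)$ is constant. This is the only point at which the hypothesis on $\Omega$ is used. Because Gleason's problem is solvable, the fibre of $\pi_{\Hinfty}\colon\mh\to\Cn$ over every point of $\Omega$ is a single point; combined with the isolatedness of $p$ in $f^{-1}(\lambda)$, this shows that the evaluation character $\delta_p\in\mh$ is an isolated point of $Y:=\{\phi\in\mh:\what{f_j}(\phi)=\lambda_j\ \text{for}\ j=1,\dots,m\}$. As $Y$ is $\Hinfty$-convex, it is the maximal ideal space of the uniform algebra $\Hinfty|_Y$, and Shilov's idempotent theorem yields $h_k\in\Hinfty$ whose Gelfand transforms converge uniformly on $Y$ to the function that is $1$ at $\delta_p$ and $0$ on $Y\sm\{\delta_p\}$. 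Restricting characters of $\Linfty$ to $\Hinfty$ maps $E$ into $Y$ (since $f\equiv\lambda$ on $E$); transporting the convergence back to $X$, and again using that the $\mh$-fibre over $p$ is $\{\delta_p\}$, we find that $\what{h_k}|_E$ converges uniformly to $\mathbf 1_{E\cap\pi^{-1}(p)}$. Thus $\mathbf 1_{E\cap\pi^{-1}(p)}$ lies in $\Hfb|_E$, which by Bishop's theorem is closed in $C(E)$; being real-valued, it is constant on $E$ by antisymmetry, and since $p\in\pi(E)$ the constant equals $1$, i.e.\ $E\subseteq\pi^{-1}(p)$. Finally, to recover the statement above one uses that every bounded $L^\infty$-pseudoconvex domain satisfies this hypothesis --- solvability of the $\ol\partial$-problem in $L^\infty$ entails solvability of Gleason's problem for $\Hinfty$.

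I expect the crux to be the argument just sketched: passing from the purely local fact that $p$ is an isolated point of the level set $f^{-1}(\lambda)$ to an honest idempotent of $\Hinfty|_Y$, and thence to a real-valued element of $\Hfb|_E$ forcing $E$ onto a single fibre. It is here that more than pseudoconvexity is needed: without control of the fibres of $\mh$ over $\Omega$ the hull of $f^{-1}(\lambda)$ in $\mh$ may fail to isolate $\delta_p$, the antisymmetric set $E$ need not collapse to a point, and the argument breaks down.
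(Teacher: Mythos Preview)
Your strategy matches the paper's in outline: apply Bishop's antisymmetric decomposition, use $\operatorname{Re}f_j,\operatorname{Im}f_j\in\Hfb$ to force each maximal antisymmetric set $E$ into a level set of $f$, and then argue that $E$ either sits where $g$ vanishes or collapses to a single point. Note, however, that the paper does not actually prove Theorem~\ref{STapprox}; it proves the variant Theorem~\ref{main1}, in which ``$L^\infty$-pseudoconvex'' is replaced by ``$\Omega$ open in $\mh$'' (a consequence of solvability of Gleason's problem). Your argument likewise uses only the Gleason hypothesis, and your closing assertion that $L^\infty$-pseudoconvexity implies Gleason solvability for $\Hinfty$ is neither claimed in the paper nor justified by you; if your target is literally Theorem~\ref{STapprox}, that reduction is a gap you would have to fill separately.

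At the crux---showing that an $E$ meeting $\Omega\setminus\Omega_f$ at an isolated point $p$ of $f^{-1}(\lambda)$ collapses---the paper takes a shorter route. Rather than working on $\mathfrak{M}_{\Linfty}$, it works on $\mb$ with $B=\Hfb$, first proving a short lemma (via representing measures) that restriction embeds $\mb$ injectively into $\mh$. Since $\Omega$ is open in $\mh$ it is open in the subspace $\mb$ as well, so $p$ is also isolated in the level set $\{\what f=\lambda\}\subset\mb$; the paper then simply invokes the fact that maximal antisymmetric sets for a uniform algebra \emph{on its own maximal ideal space} are connected, which forces $E=\{p\}$ immediately. Your Shilov-idempotent construction is correct, but it is a workaround necessitated by your choice to sit inside $\mathfrak{M}_{\Linfty}$ rather than $\mb$: there the connectedness theorem is unavailable, and you must manufacture the separating real-valued function by hand.
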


This theorem can be regarded as a partial extension to several variables of an approximation theorem of Christopher Bishop.

\begin{theorem}[\cite{CBishop}, Theorem~1.2]\label{Chris}
Suppose that $\Omega$ is an open set in the Riemann sphere and that $f\in \Hinfty$ is nonconstant on each component of $\Omega$.  Then $C(\ol\Omega)\subset \Hinfty[\ol f]$.
\end{theorem}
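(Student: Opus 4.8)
The plan is to reduce the inclusion to one concrete approximation statement and then to attack that statement by $\ol\partial$-theory. Write $B=\Hinfty[\ol f]$. One may assume $\Omega$ is connected --- treat the components separately, using that the locally constant functions on $\Omega$ lie in $\Hinfty$ --- and that $\ol\Omega$ is not the whole sphere (that case needs a separate but parallel argument); a M\"obius transformation then makes $\Omega$ a bounded domain in $\C$, so that the coordinate function $z$ lies in $\Hinfty\subseteq B$. Every element of $B$ is a uniform limit on $\Omega$ of polynomials in $\Hinfty$-functions and $\ol f$, hence is continuous on $\Omega$, and the $\Linfty$-norm on $B$ is the supremum norm; so $C(\ol\Omega)\subseteq B$ asserts genuine uniform approximability. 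Since the polynomials in $z$ and $\ol z$ are dense in $C(\ol\Omega)$ by the Stone--Weierstrass theorem, and $B$ is a closed algebra containing $z$, it suffices to prove that $\ol z\in B$: that is, for each $\varepsilon>0$ there should exist $g,h_1,\dots,h_N\in\Hinfty$ with $\sup_\Omega\bigl|\ol z-g-\sum_{j=1}^{N}h_j\ol f^{\,j}\bigr|<\varepsilon$. (I note that Theorem~\ref{STapprox}, applied in one complex variable with $m=1$ --- bounded planar domains are $L^\infty$-pseudoconvex, and $\Omega_f=\varnothing$ since $f$ is nonconstant on each component --- already puts every $g\in C(\ol\Omega)$ vanishing on $b\Omega$ into $B$; so the genuinely new content here is the behaviour of $g$ at $b\Omega$.)

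For the main step, take a trial function $u=\sum_{j\ge1}h_j\ol f^{\,j}$ with $h_j\in\Hinfty$. A direct computation gives $\ol\partial u=\ol{f'}\,P(z,\ol f)$ with $P(z,w)=\sum_{j\ge1}jh_j(z)w^{j-1}$, so $\ol\partial(\ol z-u)=1-\ol{f'}\,P(z,\ol f)=:\eta$. If one can solve $\ol\partial v=\eta$ with $v$ bounded and $\sup_\Omega|v|<\varepsilon$, then $\ol z-u-v$ is bounded and holomorphic, hence lies in $\Hinfty\subseteq B$, and $\mathrm{dist}_{\Linfty}(\ol z,B)\le\sup_\Omega|v|<\varepsilon$, as desired. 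On a bounded planar set one can solve $\ol\partial v=\eta$ with $\sup_\Omega|v|\lesssim_{p,\Omega}\|\eta\|_{L^p(\Omega)}$ for any fixed $p>2$ (Cauchy transform and H\"older's inequality). Moreover, away from the critical set $Z=\{f'=0\}$ --- discrete in the connected open set $\Omega$ since $f'\not\equiv0$ --- the map $f$ is locally biholomorphic and $1/\ol{f'}=\ol{\psi'}(\ol f)$ for a local branch $\psi$ of $f^{-1}$, so $1/\ol{f'}$ is locally a uniform limit of polynomials in $\ol f$; choosing the $h_j$ to follow these local approximations would make $\eta$ small away from a neighbourhood of $Z$.

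The hard part --- which is the real content of Christopher Bishop's theorem --- is to carry out the choice of $P$ globally, against three coupled obstructions. First, the branches $\psi$ of $f^{-1}$ have nontrivial monodromy around the critical values of $f$, so the local identities $1/\ol{f'}=\ol{\psi'}(\ol f)$ do not patch to a global one, and a naive partition-of-unity gluing $\sum_k\chi_k\,\ol{\psi_k'}(\ol f)$ is no longer of the required form $\sum_j h_j\ol f^{\,j}$ with holomorphic coefficients, since the $\chi_k$ are not holomorphic. Second, near the points of $Z$ the function $1/f'$ has poles, so $\eta$ cannot be made small there; one must excise small disks about $Z$, on which $\eta$ is merely bounded and thus contributes only $O(\delta^{2/p})\to0$ to $\|\eta\|_{L^p}$, and then show the $\ol\partial$-correction stays uniformly small after this excision. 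Third, near $b\Omega$ the derivative $f'$ may be unbounded, so the correction cannot be taken to be a single global Cauchy transform; it must be assembled by a Vitushkin-type localization that exploits cancellation in $\eta$ rather than multiplying by cutoffs, and that preserves the approximation class. I would attempt a construction that excises $Z$, covers the remainder by charts carrying single-valued branches of $f^{-1}$, and assembles the approximant by such a localization; propagating the uniform bound on the correction through the patching, the excision, and the boundary is where I expect the essential difficulty to lie, and it is presumably settled in \cite{CBishop} by an explicit quantitative construction rather than by soft functional-analytic methods.
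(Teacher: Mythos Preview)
Your proposal is not a proof: you reduce to showing $\ol z\in B$, set up a $\ol\partial$-scheme, and then correctly identify three obstructions (monodromy of $f^{-1}$, poles of $1/f'$ at the critical set, and boundary blow-up of $f'$) --- but you do not overcome any of them. You end by saying the construction ``is presumably settled in \cite{CBishop} by an explicit quantitative construction''; that is deferring the entire content of the theorem back to the source you are meant to be proving. What you have written is a plausible outline of Christopher Bishop's original hard-analysis approach, not an argument that stands on its own.

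The paper's proof takes exactly the route you dismiss in your last sentence: soft functional-analytic methods, and they sidestep all three of your obstructions at once. One works on the maximal ideal space $\mb$ of $B=\Hinfty[\ol f]$, embedded in $\mh$ via Lemma~\ref{trivial}, and uses the projection $\pi:\mb\to\ol\Omega$ (identity on $\Omega$, sending $\mb\sm\Omega$ onto $b\Omega$). Since $\Omega$ is open in $\mb$ and the level sets of a nonconstant holomorphic function of one variable are discrete, every maximal set of antisymmetry for $\what B$ --- which must lie in a level set of $\what f$ and must be connected --- is either a singleton or is contained in $\pi^{-1}(b\Omega)$. The (Errett) Bishop antisymmetric decomposition then forces the essential set of $\what B$ into $\pi^{-1}(b\Omega)$, and a theorem from \cite{Izzo2003} converts that containment directly into $C(\ol\Omega)\subset\Hinfty[\ol f]$. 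No $\ol\partial$-equation, no localization, no excision of the critical set; the discreteness of $f^{-1}(\lambda)$ is the only analytic input, and it enters through connectedness of antisymmetry sets rather than through any estimate.
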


Sahuto\u glu and Tikaradze's proof of Theorem~\ref{STapprox} was inspired by Bishop's proof of Theorem~\ref{Chris}, and like Bishop's proof, it is rather long and complicated.  A simpler proof of Bishop's theorem was given by the second author of the present paper in \cite{Izzo1993} using uniform algebra methods.  Here we will use uniform algebra methods to give a simple proof of Theorem~\ref{STapprox} with the hypothesis that $\Omega$ is an $L^\infty$-pseudoconvex domain replaced by the hypothesis that $\Omega$ is open when regarded as a subset of the maximal ideal space $\mh$ of $\Hinfty$.
(We regard $\Omega$ as a subset of $\mh$ by identifying each point $\lambda$ in $\Omega$ with the functional \lq\lq evaluation at $\lambda$\rq\rq.)
We state the result explicitly here.

\begin{theorem}\label{main1}
Let $\Omega$ be a bounded open set in $\Cn$ such that $\Omega$ is open in $\mh$, and let $f_j\in\Hinfty$ for $j=1,\ldots, m$.  Set $f=(\row fm)$.  Suppose that $g\in C(\ol \Omega)$ satisfies $g|_{b\Omega\cup \Omega_f}= 0$.  Then $g$ is in $\Hfb$.
\end{theorem}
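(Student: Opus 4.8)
The plan is to realise $\Hfb$ as a uniform algebra on a compactification of $\Omega$ sitting inside $\mh$, extend $g$ continuously to it, and then, via Bishop's antisymmetric decomposition, reduce the whole statement to an application of the \v{S}ilov idempotent theorem on the level sets of $f$.

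First, since $\Hfb$ is a closed subalgebra of the $C^*$-algebra $\Linfty$, its spectral radius equals the supremum norm, so the Gelfand transform identifies $\Hfb$ isometrically with a uniform algebra on $\maf$. The key preliminary point is that every $\phi\in\maf$ satisfies $\phi(\ol{f_j})=\ol{\phi(f_j)}$: the map $p(w,\ol w)\mapsto\phi\bigl(p(f_j,\ol{f_j})\bigr)$ is a multiplicative functional on the polynomials in $w,\ol w$ bounded by the supremum norm over $\ol{f_j(\Omega)}$, hence (Stone--Weierstrass) extends to a character of $C\bigl(\ol{f_j(\Omega)}\bigr)$, i.e.\ to evaluation at a point $w_0$, which forces $\phi(f_j)=w_0$ and $\phi(\ol{f_j})=\ol{w_0}$. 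Thus $\phi$ is determined by $\phi|_{\Hinfty}$, so restriction is a homeomorphism of $\maf$ onto a compact set $\what\Omega\subseteq\mh$ with $\Omega\subseteq\what\Omega$; by hypothesis $\Omega$ is open in $\what\Omega$. Carrying the Gelfand transforms over, $\Hfb$ becomes the uniform algebra $\mathcal A$ on $\what\Omega$ generated by $\{\what h:h\in\Hinfty\}\cup\{\ol{\what f_j}:1\le j\le m\}$, with the isomorphism $\mathcal A\to\Hfb$ given by restriction to $\Omega$; so it suffices to produce a continuous function on $\what\Omega$ lying in $\mathcal A$ whose restriction to $\Omega$ is $g$.

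The natural candidate is $\tg$, defined by $\tg=g$ on $\Omega$ and $\tg=0$ on $\what\Omega\sm\Omega$. If a net $\lambda_\alpha\in\Omega$ converges in $\mh$ to $x\in\what\Omega\sm\Omega$, then $\lambda_\alpha\to\pi(x)$ in $\Cn$; were $\pi(x)\in\Omega$, continuity of every $\Hinfty$-function at $\pi(x)$ would give $\lambda_\alpha\to\pi(x)$ in $\mh$, forcing $x=\pi(x)\in\Omega$, a contradiction; hence $\pi(x)\in b\Omega$ and, since $g\in C(\ol\Omega)$ vanishes on $b\Omega$, $g(\lambda_\alpha)\to g(\pi(x))=0$. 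So $\tg\in C(\what\Omega)$, and it remains to show $\tg\in\mathcal A$. By Bishop's antisymmetric decomposition it is enough to check $\tg|_K\in\mathcal A|_K$ for each maximal set of antisymmetry $K$ of $\mathcal A$. As $\operatorname{Re}\what f_j,\operatorname{Im}\what f_j\in\mathcal A$ are real-valued, $K$ is contained in a level set $Z_\lambda=\{x\in\what\Omega:\what f(x)=\lambda\}$, and a short argument (lifting a real function from $Z_\lambda$ to $\what\Omega$ and using maximality of $K$) shows $K$ is also a maximal set of antisymmetry of $\mathcal A|_{Z_\lambda}$; hence it suffices to prove $\tg|_{Z_\lambda}\in\mathcal A|_{Z_\lambda}$ for every $\lambda$.

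Here the structure of $Z_\lambda$ enters. The element $u_\lambda:=\sum_j(f_j-\lambda_j)(\ol{f_j}-\ol{\lambda_j})\in\Hfb$ has Gelfand transform $\sum_j|\what f_j-\lambda_j|^2\ge0$, vanishing exactly on $Z_\lambda$, so $(1+u_\lambda)^{-1}\in\Hfb$ has Gelfand transform a peak function for $Z_\lambda$; thus $Z_\lambda$ is a peak set, $\mathcal A|_{Z_\lambda}$ is a uniform algebra, and its maximal ideal space is $Z_\lambda$. On $Z_\lambda$ the function $\tg$ vanishes off the set $I_\lambda$ of points of $\Omega$ isolated in $f^{-1}(\lambda)$: it vanishes on $Z_\lambda\sm\Omega$ by construction, and on $Z_\lambda\cap\Omega_{f,\lambda}\subseteq\Omega_f$ by hypothesis. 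Since $\Omega$ is open in $\what\Omega$, each $\mu\in I_\lambda$ has a $\what\Omega$-neighbourhood contained in $\Omega$ and meeting $f^{-1}(\lambda)$ only at $\mu$, so $\{\mu\}$ is clopen in $Z_\lambda$; as $Z_\lambda$ is the maximal ideal space of $\mathcal A|_{Z_\lambda}$, the \v{S}ilov idempotent theorem gives $\chi_{\{\mu\}}\in\mathcal A|_{Z_\lambda}$. A compactness argument shows $\tg|_{Z_\lambda}=\sum_{\mu\in I_\lambda}g(\mu)\,\chi_{\{\mu\}}$ with the series uniformly convergent, so $\tg|_{Z_\lambda}\in\mathcal A|_{Z_\lambda}$, completing the proof. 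I expect the main work to lie in this last reduction---verifying that maximal sets of antisymmetry of $\mathcal A$ are again maximal sets of antisymmetry for the restricted algebras $\mathcal A|_{Z_\lambda}$, and marshalling the standard facts about peak sets so that the \v{S}ilov idempotent theorem applies---while the observation that characters of $\Hfb$ respect conjugation of the $f_j$, which is precisely what lets the hypothesis ``$\Omega$ open in $\mh$'' be brought to bear, is the point on which everything turns.
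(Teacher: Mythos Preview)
Your argument is correct and shares the paper's overall architecture: embed $\maf$ into $\mh$ via restriction (the paper does this with representing measures rather than your Stone--Weierstrass argument, but the content is identical), use the hypothesis to see that $\Omega$ is open in $\maf$, extend $g$ to a continuous $\tg$ on the maximal ideal space vanishing off $\Omega$, and apply Bishop's antisymmetric decomposition, noting that each maximal set of antisymmetry lies in a level set of $\what f$.

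The divergence is only in the endgame. The paper simply invokes the fact that a maximal set of antisymmetry for a uniform algebra on its maximal ideal space is \emph{connected} \cite[Remarks~12.7]{stout}: since the isolated points of $f^{-1}(\lambda)$ remain isolated in the larger level set $\widetilde L_\lambda\subset\mb$ (because $\Omega$ is open there), any connected $E\subset\widetilde L_\lambda$ is either a singleton or contained in $(\mb\sm\Omega)\cup\Omega_f$, where $\tg=0$. That finishes the proof in one line. Your route instead shows $Z_\lambda$ is a peak set via $(1+u_\lambda)^{-1}$, identifies its maximal ideal space, applies the \v Silov idempotent theorem to obtain the characteristic functions $\chi_{\{\mu\}}$ of the isolated points, and sums them to recover $\tg|_{Z_\lambda}$. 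This works, but it essentially reproves in a special case the connectedness result you could have quoted---indeed, the standard proof that maximal antisymmetry sets on $\ma$ are connected is itself an application of \v Silov's idempotent theorem. Your intermediate step ``$K$ is also a maximal set of antisymmetry for $\mathcal A|_{Z_\lambda}$'' is unnecessary as well: once $\tg|_{Z_\lambda}\in\mathcal A|_{Z_\lambda}$, you get $\tg|_K\in\mathcal A|_K$ for free since $K\subset Z_\lambda$. In short, the paper's approach is markedly shorter; yours is more self-contained in that it avoids citing the connectedness fact, at the cost of extra machinery (peak sets, \v Silov idempotents, and a uniform-convergence argument).
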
 

By exactly the same prove we will also establish the analogous assertion for the algebra $\Ao$ of continuous complex-valued functions on $\ol\Omega$ that are holomorphic on $\Omega$.

\begin{theorem}\label{main2}
Let $\Omega$ be a bounded open set in $\Cn$ such that $\Omega$ is open in $\mo$, and let $f_j\in\Ao$ for $j=1,\ldots, m$.  Set $f=(\row fm)$.  Suppose that $g\in C(\ol \Omega)$ satisfies $g|_{b\Omega\cup \Omega_f}= 0$.  
Then $g$ is in $\Ao[\row {\ol f}m]$.
\end{theorem}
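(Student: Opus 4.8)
The plan is to follow the strategy Izzo used in \cite{Izzo1993} for Theorem~\ref{Chris}: realize the algebra under consideration as a uniform algebra, invoke Bishop's antisymmetric decomposition, and analyze each maximal set of antisymmetry using that it must lie in a single level set of $f$. Put $B=\Ao[\row{\ol f}m]$. Since each $f_j\in\Ao\subseteq C(\ol\Omega)$, every element of the (not yet closed) algebra generated by $\Ao$ and $\ol f_1,\dots,\ol f_m$ is continuous on $\ol\Omega$, and on it the $\Linfty$-norm coincides with the supremum norm over $\ol\Omega$; hence $B$ is a uniform algebra on $\ol\Omega$ — it contains the constants and separates points of $\ol\Omega$ because $\row zn\in\Ao$. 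As $g\in C(\ol\Omega)$, Bishop's theorem reduces the desired conclusion $g\in B$ to showing that $g|_E$ lies in the uniform closure of $\{b|_E:b\in B\}$ for every maximal set of antisymmetry $E\subseteq\ol\Omega$ of $B$.

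So fix such an $E$. The functions $\operatorname{Re}f_j=\tfrac12(f_j+\ol f_j)$ and $\operatorname{Im}f_j$ lie in $B$ and are real-valued, hence constant on $E$; thus $f\equiv\lambda$ on $E$ for some $\lambda\in\Cm$, each $\ol f_j$ is the constant $\ol\lambda_j$ on $E$, and $B|_E=\Ao|_E$, while $E\subseteq\{z\in\ol\Omega:f(z)=\lambda\}$. Two cases arise. If $E\cap(\Omega\sm\ol{\Omega_f})=\varnothing$, then $E\cap\Omega\subseteq\ol{\Omega_f}$, so $E\subseteq b\Omega\cup\ol{\Omega_f}$; since $g$ is continuous and vanishes on $b\Omega\cup\Omega_f$, it vanishes on this set, so $g|_E\equiv0$ and the requirement for this $E$ holds trivially. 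Otherwise there is a point $p\in E$ with $p\in\Omega\sm\ol{\Omega_f}$. Then $p\in f^{-1}(\lambda)$ but $p\notin\Omega_{f,\lambda}$, so $p$ is an isolated point of $f^{-1}(\lambda)$; picking a ball $V$ with $p\in V$, $\ol V\subseteq\Omega$, and $V\cap f^{-1}(\lambda)=\{p\}$, and noting that $V$ is open in $\ol\Omega$, we get $V\cap E\subseteq V\cap\{f=\lambda\}=\{p\}$. Hence $\{p\}$ is relatively open in $E$.

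The remaining — and, I expect, the only substantial — task is to conclude from this that $E=\{p\}$: once that is known, $g|_E$ is constant and so lies in $B|_E$, completing the proof. This is precisely where the hypothesis that $\Omega$ is open in $\mo$ (equivalently, that Gleason's problem is solvable on $\Omega$) enters. By the maximum principle the Shilov boundary of $\Ao$ is contained in $b\Omega$ and is therefore disjoint from the open subset $\Omega$ of $\mo$; consequently Rossi's local maximum modulus principle applies at $p$, so $p$ lies in the $\Ao$-convex hull of the topological boundary of every sufficiently small neighbourhood of $p$ in $\mo$. Combining this with the relative openness of $\{p\}$ in the maximal — hence, in the appropriate sense, self-hulled — set of antisymmetry $E$, one finds that if $E\neq\{p\}$ then $E$ is disconnected and carries a non-constant real idempotent, which is impossible for an antisymmetric set; so $E=\{p\}$. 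I expect this collapsing step to be the main obstacle. The topological hypothesis on $\Omega$ is exactly what licenses the local maximum modulus argument at $p$; without it the relevant maximal ideal space can contain extra characters over $\Omega$ and the decomposition can break down, as annulus-type examples for $\Hinfty$ show.

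Theorem~\ref{main1} follows by the same three steps, with one change. Members of $\Hinfty$ need not extend continuously to $b\Omega$, so $B=\Hfb$ cannot be viewed as an algebra on $\ol\Omega$; instead one adjoins $g$ (a bounded continuous function on $\Omega$) to $B$, realizes the resulting algebra as a uniform algebra on its maximal ideal space, and carries out the antisymmetric decomposition there. The hypothesis that $\Omega$ is open in $\mh$ then does the work that $\Omega\subseteq\mo$ being open did above — in particular it forces the portion of that maximal ideal space lying over $\Omega$ to be just $\Omega$, so the isolated-point argument goes through verbatim.
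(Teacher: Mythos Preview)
Your overall strategy --- Bishop's antisymmetric decomposition, with each maximal antisymmetric set lying in a single fibre of $f$ --- is the paper's. The gap is precisely in the ``collapsing step'' you flag. Working on $\ol\Omega$, you assert that a maximal antisymmetric set $E$ for $B$ is ``in the appropriate sense self-hulled'' and then run a Shilov-idempotent argument on the disconnection $E=\{p\}\cup(E\setminus\{p\})$. But Shilov's idempotent theorem requires $E$ to be the maximal ideal space of $B|_E$; that is guaranteed when the ambient space is $\mb$, not when it is an arbitrary compact carrier such as $\ol\Omega$ (for the disc algebra viewed on the unit circle, the unique maximal antisymmetric set is the whole circle, yet its hull is the closed disc). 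Your appeal to Rossi's local maximum modulus principle does not fill this gap: it places $p$ in an $\Ao$-hull of nearby spheres but produces no real-valued element of $B|_E$ separating $p$ from $E\setminus\{p\}$, which is what contradicting antisymmetry would require. You also never explain how ``$\Omega$ open in $\mo$'' becomes ``$\Omega$ open in $\mb$'', which is what the isolation argument at $p$ actually needs.

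The paper avoids all of this by running the decomposition on $\mb$ from the outset. A short lemma (via representing measures) shows that the restriction map $\mb\to\mo$ is injective, so $\mb$ embeds in $\mo$ and the hypothesis transfers: $\Omega$ is open in $\mb$. One then extends $g$ to a continuous $\tilde g$ on $\mb$ vanishing off $\Omega$ and simply cites the standard fact that maximal antisymmetric sets for a uniform algebra \emph{on its maximal ideal space} are connected. Since each point of $\Omega\setminus\Omega_f$ is isolated in the fibre $\{\what f=\lambda\}\subseteq\mb$, connectedness forces every maximal antisymmetric set either to be a singleton or to lie in $(\mb\setminus\Omega)\cup\Omega_f$, where $\tilde g$ vanishes --- no Rossi, no idempotent theorem. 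Your argument can be repaired by passing from $E$ to its $B$-hull in $\mb$ and arguing there, but at that point you have essentially reproduced the paper's proof with an unnecessary detour through $\ol\Omega$.
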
 

We will see also that a similar argument in combination with a result in the second author's paper \cite{Izzo2003} yields yet another proof of Theorem~\ref{Chris}.

The class of domains $\Omega$ for which $\Omega$ is open in $\mh$ (or $\mo$) is quite broad.  To see this, note that for $A$ a Banach algebra of continuous complex-valued functions on $\Omega$ containing the functions $\row zn$, the set $\Omega$ is open in $\ma$ 
whenever 
$\pi_A$ is injective over $\Omega$, since in that case $\Omega$ (regarded as a subset of $\ma$) coincides with $\pi_A^{-1}(\Omega)$.  Furthermore this injectivity over $\Omega$ obviously holds whenever Gleason's problem is solvable for $A$, i.e., whenever, for every point $a=(\row an)\in \Omega$, the functions $z_1-a_1,\ldots, z_n-a_n$ generate the ideal of functions in $A$ vanishing at $a$.  Gleason's problem has been extensively studied and is known to be solvable for $\Hinfty$ and $\Ao$ on many classes of domains.  (See for instance, \cite{AS} for the case of strongly pseudoconvex domains, or for the particular case of the ball \cite{rudin}.)

Theorems~\ref{STapprox}, \ref{main1}, and~\ref{main2} can be reformulated using the notion of essential set.  For a uniform algebra $A$ on a compact space $X$, the \emph{essential set} $\E$ for $A$ is the smallest closed subset of $X$ such that $A$ contains every continuous complex-valued function on $X$ that vanishes on $\E$.
The existence of the essential set was proved by 
Herbert Bear \cite{Bear} (or see \cite{Browder}).  
Theorem~\ref{main2} asserts that under the given hypotheses on $\Omega$ and $f$, the essential set for $\Ao[\row {\ol f}m]$ is contained in $b\Omega \cup \ol\Omega_f$.
The conclusion of Theorems~\ref{STapprox} and \ref{main1} can be reformulated as   the assertion that the essential set for $\Hfb$ regarded as a uniform algebra on its maximal ideal space is contained in $(\mh\sm\Omega)\cup \ol\Omega_f$.

One reason for interest in 
the above theorems stems from an application to Toeplitz operators given by Sahuto\u glu and Tikaradze.
Let $\Lao$ denote the Bergman space, i.e., the space of square integrable, holomorphic functions on $\Omega$, and let $P: \Lo\arrow\Lao$ denote the Bergman projection, i.e., the orthogonal projection of $\Lo$ onto $\Lao$.  For $\phi\in \Linfty$ the Toeplitz operator $T_{\phi}:\Lao\rightarrow \Lao$ is defined by the equation $T_{\phi}(f)=P(\phi f)$.  The commuting Toeplitz operator problem is to characterize those functions $\phi, \psi\in \Linfty$ such that $T_\phi$ and $T_\phi$ commute.  
With the Hardy space in place of the Bergman space, the commuting Toeplitz operator problem was solved by Arlen Brown and Paul Halmos in \cite{bh}.  On the Bergman space, the problem is still open even on the disk.  There are, however, various partial solutions including the following result due to Sheldon Axler, \v Zeljko \v Cu\v ckovi\v c, and Nagisetti Rao \cite{acr}.

\begin{theorem}[\cite{acr}]\label{ACR}
Let $\Omega$ be a domain in the complex plane, let
$\phi$ be a nonconstant bounded holomorphic function on $\Omega$, and let $\psi$ is a bounded measurable function on $\Omega$ such that
$T_\phi$ and $T_\psi$ commute.  Then $\psi $ is holomorphic.  
\end{theorem}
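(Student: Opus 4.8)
The plan is to turn the commutator identity into an orthogonality relation and then apply Bishop's Theorem~\ref{Chris}. Since $\phi$ is bounded and holomorphic, $T_\phi$ is just multiplication by $\phi$ on $\Lao$, so for every $f\in\Lao$ one has $T_\phi T_\psi f=\phi\,P(\psi f)$ while $T_\psi T_\phi f=P(\phi\psi f)$; here $\psi f\in\Lo$ because $\psi$ is bounded. As $\phi\,P(\psi f)\in\Lao$, uniqueness of the orthogonal decomposition shows that the hypothesis $T_\phi T_\psi=T_\psi T_\phi$ is equivalent to
\[
\phi\bigl(\psi f-P(\psi f)\bigr)\perp\Lao\qquad\text{for every }f\in\Lao .
\]
Since $T_\phi$ commutes with $T_\psi$, so does $T_\phi^{\,k}=T_{\phi^k}$ for every $k\ge0$, and the identical reasoning with $\phi^k$ in place of $\phi$ gives $\phi^k\bigl(\psi f-P(\psi f)\bigr)\perp\Lao$ for all $f\in\Lao$ and $k\ge0$, the case $k=0$ being the defining property of $P$. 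Writing $u_f=\psi f-P(\psi f)\in\Lo$ and using $\langle\phi^k u_f,g\rangle=\langle u_f,\ol\phi^{\,k}g\rangle$, this says precisely that $u_f$ is orthogonal in $\Lo$ to every function of the form $\ol\phi^{\,k}g$ with $g\in\Lao$ and $k\ge0$.

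The crux is to show that $\operatorname{span}\{\ol\phi^{\,k}g:g\in\Lao,\ k\ge0\}$ is dense in $\Lo$; this is where Bishop's theorem is used. Fix a nonzero $g_0\in\Lao$ (one may take $g_0\equiv1$ when $\Omega$ is bounded). Since $\Omega$ is a domain, $\phi$ is nonconstant on each component of $\Omega$, so Theorem~\ref{Chris} gives $C(\ol\Omega)\subset\Hinfty[\ol\phi]$; thus every $c\in C(\ol\Omega)$ is an $\Linfty$-limit of finite sums $\sum_k h_k\ol\phi^{\,k}$ with $h_k\in\Hinfty$. Multiplying by $g_0$, the function $cg_0$ is then an $\Lo$-limit of the sums $\sum_k(h_kg_0)\ol\phi^{\,k}$, each term of which lies in the span above because $h_kg_0\in\Lao$. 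As $g_0$ is nonzero almost everywhere and the restrictions to $\Omega$ of functions in $C(\ol\Omega)$ form a dense subspace of $\Lo$ (they include $C_c(\Omega)$), the functions $cg_0$ are dense in $\Lo$, and the required density follows.

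Putting the two steps together, $u_f=0$ for every $f\in\Lao$; in particular $\psi g_0=P(\psi g_0)\in\Lao$. Hence the bounded function $\psi$ coincides off the zero set of $g_0$ with the holomorphic function $\psi g_0/g_0$, and since the resulting singularities are removable, $\psi$ is holomorphic on $\Omega$, as desired.

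I expect the only real difficulty to be the density statement in the second paragraph. The span in question generates a non-self-adjoint subalgebra of $C(\ol\Omega)$ --- for $\phi(z)=z^2$ the algebra generated by $z$ and $\ol\phi$ does not contain $\ol z$ --- so no elementary approximation theorem applies, and one genuinely needs the full strength of Bishop's Theorem~\ref{Chris} (equivalently, the one-variable case of Theorem~\ref{main1}). The reduction of the commutator identity to an orthogonality relation and the closing removable-singularity argument are routine by comparison.
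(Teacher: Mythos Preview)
The paper does not actually prove Theorem~\ref{ACR}; it is quoted from \cite{acr} as background, with the remark that Axler, \v Cu\v ckovi\'c, and Rao derived it from Bishop's Theorem~\ref{Chris}. Your proposal reconstructs precisely that derivation, so in spirit it matches what the paper attributes to the original source rather than anything the paper itself supplies.

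The argument is essentially correct: reducing the commutator identity to the orthogonality $u_f\perp\ol\phi^{\,k}g$ for all $g\in\Lao$, $k\ge0$, and then invoking Theorem~\ref{Chris} to show these functions span a dense subspace of $\Lo$, is the standard route. Two small points deserve tightening. First, when $\Omega$ has infinite area the restrictions of $C(\ol\Omega)$ are not contained in $\Lo$, so your parenthetical is literally false there; what you actually need (and use) is that $C_c(\Omega)\subset C(\ol\Omega)$ and $C_c(\Omega)$ is dense in $\Lo$. Second, the implication ``$g_0\ne0$ a.e.\ and $C(\ol\Omega)$ dense in $\Lo$ imply $\{cg_0\}$ dense'' is not automatic from those two facts alone; the clean justification is by duality: if $u\in\Lo$ satisfies $\int u\,\ol{cg_0}=0$ for all $c\in C_c(\Omega)$, then the $L^1$ function $u\ol{g_0}$ annihilates $C_c(\Omega)$, hence vanishes a.e., and since $g_0\ne0$ a.e.\ one gets $u=0$. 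Finally, your proof tacitly assumes $\Lao\ne\{0\}$; this is harmless, since if the complement of $\Omega$ is polar then $\Hinfty$ reduces to constants and the hypothesis that $\phi$ is nonconstant is vacuous.
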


Axler, Cu\v ckovi\v c, and Rao obtained this theorem as a consequence of Theorem~\ref{Chris} of Bishop.  Sahuto\u glu and Tikaradze used their partial extension of Bishop's theorem to several variables (Theorem~\ref{STapprox} above),  to give a generalization of the Axler-Cu\v ckovi\v c-Rao theorem to several variables. 

\begin{theorem}[\cite{ST}, Corollary~2]\label{ST2}
Let $\Omega$ be a bounded $L^\infty$-pseudoconvex domain in $\Cn$, let $g\in L^\infty(\Omega)$, and let $f_j\in H^{\infty}(\Omega)$ for all $j=1,\ldots, m$.  Suppose the Jacobian of the map
$f=(\row fm):\Omega\arrow \Cm$ has rank $n$ at some point $z\in \Omega$ and $T_g$ commutes with $T_{f_j}$ for all $1\leq j\leq m$.  Then $g$ is holomorphic. 
\end{theorem}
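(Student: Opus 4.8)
The plan is to follow the strategy that Axler, Cu\v ckovi\v c, and Rao used in one variable, with Theorem~\ref{main1} (or Theorem~\ref{STapprox}) playing the role that Bishop's theorem played there. First I would recall the standard commutator identity for Toeplitz operators with (anti)holomorphic symbols: if $h$ is bounded holomorphic on $\Omega$, then for $g\in\Linfty$ one has a formula for $T_g T_{\ol h}-T_{\ol h}T_g$ in terms of the Bergman projection $P$ applied to products involving $g$, $h$, and their derivatives; more precisely, using that $T_hT_g=T_{hg}$ when $h$ is holomorphic, one computes $[T_g,T_{f_j}]=[T_g, T_{f_j}]$ and, passing to adjoints (replacing $f_j$ by $\ol f_j$ is not literally the hypothesis, so I would instead work directly with the identity $T_{f_j}T_g - T_g T_{f_j}$ acting on reproducing kernels). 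The cleanest route: apply the commutator $[T_g, T_{f_j}]$ to the normalized Bergman kernel $k_w$ at a point $w$ and take an inner product, obtaining after a standard computation an expression of the form $\langle (f_j(w)-f_j)\,g\, k_w, k_w\rangle = 0$, i.e.\ $P\bigl((f_j-f_j(w))g\bigr)(w)=0$ for every $w\in\Omega$ and every $j$.

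Next I would exploit this pointwise identity. Fix $\lambda\in\Cm$ and consider $w$ ranging over $f^{-1}(\lambda)$; on this set $f_j(w)=\lambda_j$ for all $j$, so the identity reads $P\bigl((f_j-\lambda_j)g\bigr)(w)=0$. The point is to deduce that $g$ agrees with a holomorphic function on (the nonisolated part of) each fiber. Concretely, I expect to show that the function $g$, restricted to $\Omega_f$, is forced to vanish against an appropriate collection of test functions, or better: I would produce a bounded holomorphic function $G$ on $\Omega$ such that $g-G$ vanishes on $b\Omega\cup\Omega_f$ — actually $g-G$ need not be continuous, so instead I apply Theorem~\ref{STapprox}/\ref{main1} to a continuous function built from $g$. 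Here is the key reduction used by ACR: one shows that $g$ is constant on each nonisolated fiber component, hence $g$ factors (on $\Omega_f$) through $f$, and then the approximation theorem gives that $g$ lies in the algebra generated by $\Hinfty$ and $\ol f_1,\dots,\ol f_m$. Combined with the rank hypothesis — the Jacobian of $f$ has rank $n$ at some $z\in\Omega$, so $f$ is locally biholomorphic near $z$ onto an open subset of $\Cm$, forcing $\Omega_f$ to have nonempty interior is \emph{not} what we want; rather the rank condition guarantees $\Omega_f$ is large enough (its complement is a proper analytic subvariety locally) that membership of $g$ in $\Hinfty[\ol f_1,\dots,\ol f_m]$ plus the fiberwise constancy forces $\ol\partial g=0$.

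The main obstacle, as I see it, is bridging the gap between the measurable function $g\in\Linfty$ and the hypothesis of Theorem~\ref{STapprox}, which is stated for $g\in C(\ol\Omega)$ vanishing on $b\Omega\cup\Omega_f$. Sahuto\u glu and Tikaradze presumably handle this by a localization/mollification argument: one does not apply the approximation theorem to $g$ itself but uses it to understand the structure of $\Hinfty[\ol f_1,\dots,\ol f_m]$ and then shows, via the commutator identities above and the rank-$n$ condition, that any $g$ commuting with all $T_{f_j}$ must have $\ol\partial g=0$ as a distribution on $\Omega$. The delicate steps are: (i) justifying the reproducing-kernel computation rigorously (boundedness, the fact that $\{k_w\}$ is total); (ii) upgrading "$P((f_j-\lambda_j)g)$ vanishes on the fiber" to genuine holomorphy of $g$, which is where the rank hypothesis enters essentially (near $z$ the map $f$ has $n$-dimensional image, so the fibers are discrete there and one needs the global statement about $\Omega_f$); and (iii) confirming that the $L^\infty$-pseudoconvexity hypothesis is exactly what makes the relevant $\ol\partial$-solvability/approximation machinery applicable so that the conclusion $\psi$ holomorphic (in ACR's notation, here $g$) follows. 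I would expect the bulk of the argument to be the careful Toeplitz/commutator bookkeeping, with Theorem~\ref{STapprox} invoked essentially as a black box at one clean point.
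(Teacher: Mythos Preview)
The paper does not give its own proof of this statement: Theorem~\ref{ST2} is quoted from \cite{ST} (as Corollary~2 there) and is included only for context. The only information the present paper offers about its proof is that Theorem~\ref{ST1} --- the $L^p$-density of $\Hfb$ in $L^p(\Omega)$ under the rank-$n$ hypothesis --- is used as an intermediate step, and that the argument from \cite{ST} goes through verbatim with Theorem~\ref{main1} substituted for Theorem~\ref{STapprox}. So there is no ``paper's own proof'' to compare your proposal against.

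That said, your proposal is not a proof but a list of heuristics with explicitly acknowledged gaps, and the route you emphasize is not the one the paper points to. You try to apply the approximation theorem directly to $g$ (or to something built from $g$), and you correctly flag that this fails because $g$ is merely in $L^\infty(\Omega)$ while Theorem~\ref{STapprox} requires a function in $C(\ol\Omega)$ vanishing on $b\Omega\cup\Omega_f$. The reproducing-kernel computation you sketch and the attempt to show ``$g$ is constant on each nonisolated fiber component'' are neither carried out nor clearly sufficient; the rank-$n$ hypothesis in fact makes the fibers discrete near $z$, so $\Omega_f$ is a proper analytic set and carries no information about an $L^\infty$ function by itself.

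The structure actually indicated by the paper is different and cleaner: the rank-$n$ condition is exactly condition~(iii) of Theorem~\ref{ST1}, so Theorem~\ref{STapprox} (via Theorem~\ref{ST1}) gives that $\Hfb$ is dense in $L^p(\Omega)$. One then uses the standard Toeplitz identities $T_\phi T_h=T_{\phi h}$ and $T_{\ol h}T_\phi=T_{\ol h\phi}$ for $h$ holomorphic to propagate the commutation hypothesis from the generators $f_j$ to the full algebra $\Hfb$, and finally invokes the $L^p$-density to conclude that the relevant (semi-)commutator of $T_g$ vanishes against a dense class of symbols, forcing $g$ to be holomorphic. Your late remark that ``one does not apply the approximation theorem to $g$ itself but uses it to understand the structure of $\Hinfty[\ol f_1,\dots,\ol f_m]$'' is the correct instinct; the missing idea is that this ``structure'' is precisely the $L^p$-density statement of Theorem~\ref{ST1}.
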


As an intermediate step in the proof of Theorem~\ref{ST2}, Sahuto\u glu and Tikaradze used Theorem~\ref{STapprox} to prove an $L^p$-approximation theorem.

\begin{theorem}[\cite{ST}, Corollary~1]\label{ST1}
Let $\Omega$ be a bounded $L^\infty$-pseudoconvex domain in $\Cn$ and $f_j\in H^{\infty}(\Omega)$ for all $j=1,\ldots, m$.  Then the following are equivalent.
\item{\rm(i)} $\Hfb$ is dense in $L^p(\Omega)$ for all $0<p<\infty$.
\item{\rm(ii)} $\Hfb$ is dense in $L^p(\Omega)$ for some $1\leq p<\infty$.
\item{\rm(iii)} the Jacobian of the map
$f=(\row fm):\Omega\arrow \Cm$ has rank $n$ at some point $z\in \Omega$. 
\end{theorem}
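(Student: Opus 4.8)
The plan is to establish the cycle of implications (i)$\Rightarrow$(ii)$\Rightarrow$(iii)$\Rightarrow$(i); the implication (i)$\Rightarrow$(ii) is immediate.

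For (iii)$\Rightarrow$(i), I would begin by noting that the set $Z$ of points of $\Omega$ at which the Jacobian of $f$ has rank less than $n$ is the common zero locus of the $n\times n$ minors of the Jacobian matrix; these minors are holomorphic on the domain $\Omega$, and by hypothesis~(iii) not all of them vanish identically, so $Z$ is a proper analytic subvariety of $\Omega$, hence a Lebesgue null set. Moreover $\Omega_f\subseteq Z$, because at a point where $df$ is injective the map $f$ is locally injective and that point is therefore isolated in its own fibre; since $Z$ is relatively closed in $\Omega$, the closure of $\Omega_f$ in $\Omega$ also lies in $Z$. Now given $h\in L^p(\Omega)$ I would first approximate it in $L^p$ by some $h_1\in C_c(\Omega)$, and then, using that the compact set $\operatorname{supp} h_1\cap \ol{\Omega_f}$ is contained in $Z$ and hence is null (so has neighbourhoods of arbitrarily small measure), multiply $h_1$ by a smooth cutoff to produce $h_2\in C_c(\Omega)$ which is $L^p$-close to $h$ and vanishes on a neighbourhood of $\ol{\Omega_f}$. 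Then $h_2\in C(\ol\Omega)$ with $h_2|_{b\Omega\cup\Omega_f}=0$, so Theorem~\ref{STapprox} gives $h_2\in\Hfb$; since $\Omega$ is bounded, $L^\infty$-convergence implies $L^p$-convergence, so $h$ lies in the $L^p$-closure of $\Hfb$. This proves density for all $0<p<\infty$ simultaneously.

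For (ii)$\Rightarrow$(iii) I would argue contrapositively, assuming the Jacobian of $f$ has rank strictly less than $n$ at every point of $\Omega$ and producing, for each fixed $p\in[1,\infty)$, a bounded measurable function not in the $L^p$-closure of $\Hfb$. Let $k\le n-1$ be the maximal rank attained by the Jacobian and let $U_0$ be the nonempty open set on which the rank equals $k$; on $U_0$ the map $f$ has constant rank, so the holomorphic rank theorem furnishes a nonempty open set $U\subseteq U_0$ with holomorphic coordinates $(w_1,\dots,w_n)$ in which $f$ is a function of $w_1,\dots,w_k$ alone. Thus $U$ is foliated by the $w_{k+1}$-slices, which are analytic discs on which $f$ is constant and on each of which $w_{k+1}$ restricts to a holomorphic coordinate. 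I would observe that every element of $\Hfb$ restricts to a holomorphic function on each of these discs: it is a uniform limit on $\Omega$ of polynomials in bounded holomorphic functions and in $\ol f_1,\dots,\ol f_m$, the latter being constant on each disc, and uniform limits of holomorphic functions are holomorphic. Taking $g_0$ to be $\ol w_{k+1}$ on $U$ and $0$ elsewhere, I would show $g_0$ is not approximable: if $F_j\in\Hfb$ converged to $g_0$ in $L^p(\Omega)$, then expressing $U$ via these coordinates as a product of a transversal and a disc and applying Fubini's theorem, along a subsequence and for almost every transversal parameter the functions $F_j$ would converge in $L^p$ of a disc to $\ol w_{k+1}$; but each $F_j$ restricted to that disc is holomorphic, and since $|h|^p$ is subharmonic for $h$ holomorphic, $L^p$-convergence of holomorphic functions forces locally uniform convergence, so the limit would be holomorphic on that disc, which $\ol w_{k+1}$ is not. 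Hence $\Hfb$ is not dense in $L^p(\Omega)$.

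The step demanding genuine care is the second half of (ii)$\Rightarrow$(iii): one must pass correctly between the measure-zero analytic discs, on which the decisive holomorphicity is visible, and the positive-measure open set $U$ on which $L^p$-approximation is measured — which is precisely why the full foliation supplied by the rank theorem, together with Fubini, is used rather than a single disc, and why one should fix the continuous representatives of the $L^\infty$-classes in $\Hfb$ before restricting them to the discs. The remaining ingredients — the holomorphic rank theorem, the fact that $L^p$-limits of holomorphic functions are holomorphic, and the measure-theoretic cutoff — are standard. Finally, since $\Hfb$ enters only through Theorem~\ref{STapprox}, the same argument yields the analogous statement in which the hypothesis that $\Omega$ is $L^\infty$-pseudoconvex is replaced by the hypothesis that $\Omega$ is open in $\mh$, simply by invoking Theorem~\ref{main1} in place of Theorem~\ref{STapprox}.
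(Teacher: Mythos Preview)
Your argument is correct. For (iii)$\Rightarrow$(i) your approach---reduce to $C_c(\Omega)$ functions vanishing on $\Omega_f$ by exploiting that $\ol{\Omega_f}\cap\Omega$ sits inside the null analytic set $Z$, then invoke Theorem~\ref{STapprox}---is the one the paper attributes to \cite{ST}.

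For (ii)$\Rightarrow$(iii), however, you take a genuinely different route. This is the only implication the paper actually spells out (in Remark~\ref{correction}, correcting a slip in \cite{ST}), and there it is dispatched in two lines by appeal to an external result: if $\Hfb$ is dense in some $L^p(\Omega)$ with $1\le p<\infty$, then \cite[Theorem~4.2 or Lemma~4.3]{ib} forces the differentials of the generating family $\Hinfty\cup\{\ol f_1,\ldots,\ol f_m\}$ to span the full $2n$-dimensional complexified cotangent space at some point $z$; since every $dh$ with $h\in\Hinfty$ already lies in the span of $dz_1,\ldots,dz_n$, the $d\ol f_j$ must supply the remaining $n$ directions, whence $df(z)$ has rank $n$. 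Your argument instead constructs the obstruction by hand: straighten $f$ locally via the holomorphic constant-rank theorem, observe that every element of $\Hfb$ is holomorphic along the $w_{k+1}$-leaves, and show $\ol w_{k+1}$ cannot be approximated by combining Fubini with the $L^p$-to-locally-uniform estimate for holomorphic functions on those leaves. The paper's route is much shorter but imports a black-box $L^p$-density criterion for smooth function algebras; yours is longer but self-contained and makes the underlying geometric mechanism---holomorphicity along the fibres of $f$---completely transparent. Your closing remark, that the same proof goes through with Theorem~\ref{main1} replacing Theorem~\ref{STapprox}, is exactly the observation the paper makes.
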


Repeating the proofs of Theorems~\ref{ST2} and~\ref{ST1} given in \cite{ST} with our Theorem~\ref{main1} in place of Theorem~\ref{STapprox} shows that Theorems~\ref{ST2} and~\ref{ST1} continue to hold with the hypothesis that $\Omega$ is an $L^\infty$-domain replaced by the hypothesis that $\Omega$ is open in $\mh$.  (See, however, Remark~\ref{correction} at the end of our paper).

%
%

\section{The Proofs}

We will need the following elementary lemma whose proof we include for completeness.

\begin{lemma}\label{trivial}
Let $\Sigma$ be a topological space and let $A$ be a supremum normed Banach algebra of bounded continuous complex-valued functions on $\Sigma$ that separates points and contains the constants.  Let $\row fm$ be functions in $A$.  Then the map $r:\maf\arrow\ma$ that sends each multiplicative linear functional on $\Afb$ to its restriction to $A$ is injective.  
\end{lemma}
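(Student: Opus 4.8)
The plan is to prove the slightly more explicit statement that any two characters (nonzero multiplicative linear functionals) $\varphi,\psi$ on $B:=\Afb$ with $\varphi|_A=\psi|_A$ must be equal; this is precisely the injectivity of $r$. (Note in passing that a character on a unital Banach algebra is automatically continuous, and that the restriction to $A$ of a character $\varphi$ on $B$ is again a character since $\varphi(1)=1$, so $r$ is indeed a well-defined map $\maf\arrow\ma$.) Since $B$ is generated as a Banach algebra by $A$ together with $\ol f_1,\ldots,\ol f_m$, and characters are continuous algebra homomorphisms, it suffices to verify that $\varphi(\ol f_j)=\psi(\ol f_j)$ for each $j$.

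The key point is the elementary fact that a character on a supremum-normed unital Banach algebra of bounded functions takes a real value at every real-valued element of the algebra. I would include the standard short proof: if $u$ is real valued and $\varphi(u)=a+bi$, then for every $t\in\mathbb{R}$,
\[
a^2+(b+t)^2=\bigl|\varphi(u+it\cdot 1)\bigr|^2\le\|u+it\cdot 1\|_\infty^2=\|u\|_\infty^2+t^2,
\]
so that $a^2+b^2+2bt\le\|u\|_\infty^2$ for all real $t$, which forces $b=0$.

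Now write $f_j=u_j+iv_j$ with $u_j=\tfrac12(f_j+\ol f_j)$ and $v_j=\tfrac1{2i}(f_j-\ol f_j)$. These functions are real valued, and since $B$ is an algebra containing both $f_j$ and $\ol f_j$, they lie in $B$. By the fact just recalled, $\varphi(u_j),\varphi(v_j),\psi(u_j),\psi(v_j)$ are all real. Because $f_j\in A$, the hypothesis $\varphi|_A=\psi|_A$ gives $\varphi(f_j)=\psi(f_j)$, and equating real and imaginary parts yields $\varphi(u_j)=\psi(u_j)$ and $\varphi(v_j)=\psi(v_j)$. Hence $\varphi(\ol f_j)=\varphi(u_j)-i\varphi(v_j)=\psi(u_j)-i\psi(v_j)=\psi(\ol f_j)$, which is what we needed.

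I do not anticipate any genuine obstacle: the argument is purely formal once one has the real-valuedness fact above, which is itself elementary. The only point requiring a little care is that the real and imaginary parts $u_j,v_j$ need only be known to belong to $B$ rather than to $A$, and this is immediate from $B$ being an algebra containing $f_j$ and $\ol f_j$.
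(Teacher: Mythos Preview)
Your proof is correct. Both your argument and the paper's reduce the injectivity to showing $\varphi(\ol f_j)=\overline{\varphi(f_j)}$ for any character $\varphi$ on $B$, but you reach this via a different mechanism. The paper first compactifies (extending to the closure of $\Sigma$ in $\ma$ to get genuine uniform algebras), then invokes representing measures: if $\mu$ represents $\varphi$, then $\varphi(\ol f_j)=\int\ol f_j\,d\mu=\overline{\int f_j\,d\mu}=\overline{\varphi(f_j)}$. You instead use the elementary norm inequality $|\varphi(u+it)|\le\|u+it\|_\infty$ to show directly that characters take real values on real-valued elements, and then decompose $f_j$ into real and imaginary parts inside $B$. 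Your route is slightly more self-contained, avoiding both the compactification step and the appeal to representing measures; the paper's route is perhaps more conceptual for readers already steeped in uniform algebra machinery. Neither approach has any real advantage over the other in this setting.
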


\begin{proof}
By replacing the functions in $A$ and the functions $\row {\ol f}m$ by their continuous extensions to the closure of $\Sigma$ in $\ma$, we may assume without loss of generality that $\Sigma$ is compact and $A$ and $A[\row {\ol f}m]$ are uniform algebras on $\Sigma$.
Now suppose $\varphi_1$ and $\varphi_2$ are two multiplicative linear functionals on $\Afb$ whose restrictions to $A$ coincide.  Choose representing measures $\mu_1$ and $\mu_2$ on $\Sigma$ for $\varphi_1$ and $\varphi_2$, respectively.  Then for each $j=1,\ldots, m$, we have
$$\varphi_1(f_j)=\int \ol f_j\, d\mu_1=\ol{\int f_j\, d\mu_1}=\ol{\varphi_1(f_j)}=\ol{\varphi_2(f_j)}=\ol{\int f_j\, d\mu_2}=\int \ol f_j\, d\mu_2=\varphi_2(f_j).$$
Consequently, $\varphi_1=\varphi_2$.
\end{proof}

\begin{proof}[Proof of Theorems~\ref{main1} and~\ref{main2}]
Set $A=\Hinfty$, for the proof of Theorem~\ref{main1}, or $A=\Ao$, for the proof of Theorem~\ref{main2}. 
Set $B=A[\row {\ol f}m]$.  Let $\what B$ denote the uniform algebra on $\mb$ whose members are the Gelfand transforms of the functions in $B$.
By Lemma~\ref{trivial} we can regard $\mb$ as a subspace of $\ma$ by identifying each element of $\mb$ with its restriction to $A$.  Since $\Omega$ is open in $\mb$ and is contained in the subspace $\ma$, the set $\Omega$ is open in $\mb$ as well.  

We can regard $g$ as defined and continuous on all of $\Cn$ by considering $g$ to be identically zero on $\Cn\sm\Omega$.  
Note that $\Omega$, regarded as a subset of $\mb$, is closed in $\pi_B^{-1}(\Omega)$
(because it is the subset of $\pi_B^{-1}(\Omega)$ where the two continuous functions $\pi_B$ and the identity function agree).  Thus the closure of $\Omega$ in $\mb$ is contained in $\Omega \cup \pi_B^{-1}(\Cn\sm\Omega)$.  Since the function $g\circ\pi_B$ is identically zero on $\pi_B^{-1}(\Cn\sm\Omega)$, it follows that there is a well-defined continuous function $\tg$ on $\mb$ given by
$$
\tg(x) = 
\begin{cases} (g\circ\pi_B)(x) &\mbox{for\ } x\ \mbox{in\ the\ closure\ of\ $\Omega$ in $\mb$} \\
0 & \mbox{for\ } x\ \mbox{in\ } \mb\sm\Omega. 
\end{cases}
$$
By applying the Bishop antisymmetric decomposition (see \cite[Theorem~2.7.5]{Browder}, \cite[Theorem~II.13.1]{Gamelin}, or \cite[Theorem~12.1]{stout}),
we will show that $\tg$ is in $\what  B$.   It follows that $g$ is in $B$.

Let $E$ be a maximal set of antisymmetry for $\what B$. Since the real and imaginary parts of each of $\row fm$ lie in $B$, the set $E$ must be contained in a common level set of the functions $\row {\what f}m$.  Let $\row \lambda m$ denote the respective constant values of $\row {\what f}m$ on $E$.  By the definition of $\Omega_f$, each point of the set $L_\lambda=\{ z\in\Omega: \bigl(f_1(z),\ldots, f_m(z)\bigr)=(\row \lambda m)\}$ that is not in $\Omega_f$ is an isolated point of $L_\lambda$.  Because $\Omega$ is open in $\mb$, it follows that each point of $L_\lambda$ that is not in $\Omega_f$ is also an isolated point of the set 
$\widetilde L_\lambda=\{ z\in\mb: \bigl(\what f_1(z),\ldots, \what f_m(z)\bigr)=(\row \lambda m)\}$.  Since each maximal set of antisymmetry for a uniform algebra on its maximal ideal space is connected \cite[Remarks~12.7]{stout}, it follows that $E$ must be either a singleton set or else be contained in $(\mb\sm\Omega)\cup \Omega_f$.  Since $\tg$ is identically zero on $(\mb\sm\Omega)\cup \Omega_f$, we conclude that $\tg|_E$ is in $\what B|_E$.  Therefore, 
$\tg$ is in $\what B$ by the Bishop antisymmetric decomposition.
\end{proof}

\begin{proof}[Proof of Theorem~\ref{Chris}]
The basic idea is the same as in the preceding proof.  Set $B=\Hinfty[\ol f]$.  Let $\what B$ denote the uniform algebra on $\mb$ whose members are the Gelfand transforms of the functions in $B$.  Regard $\mb$ as a subspace of $\mh$ via Lemma~\ref{trivial}.

There is a continuous map $\pi_{\Hinfty}\arrow \ol\Omega$ that is the identity on $\Omega$ and takes $\mh\sm\Omega$ onto $b\Omega$.  When $\Omega\subset\C$ is bounded, $\pi_{\Hinfty}$ is just the Gelfand transform of $z$.  In the general case the definition of $\pi_\Hinfty$ is more complicated.  See \cite{local}.  In particular, $\Omega$ is open in $\mh$, and hence, in the subspace $\mb$.  Let $\pi$ be the restriction of $\pi_\Hinfty$ to $\mb$.

Each maximal set of antisymmetry for $\what B$ must be contained in a level set of $\what f$, and by \cite[Remarks~12.7]{stout} must be connected.  Since each level set of a nonconstant holomorphic function of one complex variable is discrete, and $\Omega$ is open in $\mb$, it follows that each maximal set of antisymmetry for $\what B$ is either a singleton set of else is contained in $\mh\sm\Omega=\pi^{-1}(b\Omega)$.  Invoking the Bishop antisymmetric decomposition, we conclude that the essential set for $\Hinfty[\ol f]$ is contained in $\pi^{-1}(b\Omega)$.  By \cite[Theorem~4.1]{Izzo2003}, which we quote below for the reader's convenience, it follows at once that $C(\ol \Omega)\subset \Hinfty[\ol f]$.
\end{proof}

\begin{theorem}[\cite{Izzo2003}]
Let $\Omega$ be an open set in the Riemann sphere, and
let $A$ be a uniformly closed algebra of bounded continuous complex-valued functions on $\Omega$.  If $\Omega\subset\C$ is bounded assume that $A\supset\Ao$, and if $\Omega$ is unbounded assume that $A\supset \Hinfty$.  Let $\E$ denote the essential set of $A$ regarded as a uniform algebra on $\ma$.  Then $A\supset C(\ol\Omega)$ if and only if $\E\subset\pi^{-1}(b\Omega)$.
\end{theorem}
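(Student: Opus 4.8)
\emph{Proof proposal.}
The plan is to run the Bishop antisymmetric decomposition once more, now in tandem with the defining property of the essential set. First I record the basic geometry: since $\pi$ is the identity on $\Omega$ and carries $\ma\sm\Omega$ onto $b\Omega$, we have $\pi^{-1}(\Omega)=\Omega$ and $\pi^{-1}(b\Omega)=\ma\sm\Omega$; in particular $\Omega$ is open in $\ma$, the set $\pi^{-1}(b\Omega)$ is closed, and $\pi^{-1}(p)=\{p\}$ for every $p\in\Omega$. Throughout I use that $\what A$, containing the coordinate functions (directly, in the bounded case, via $\Ao$, or through the structure of $\pi_{\Hinfty}$ in general), contains the point-separating subalgebra $C(\ol\Omega)\circ\pi$ of functions constant on the fibres of $\pi$; since the real elements of this subalgebra already separate distinct fibres, every set of antisymmetry for $\what A$ is contained in a single fibre $\pi^{-1}(p)$.

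For the forward implication, assume $C(\ol\Omega)\subset A$. To prove $\E\subset\pi^{-1}(b\Omega)$ it suffices, by the minimality of $\E$, to show that every $g\in C(\ma)$ vanishing on $\pi^{-1}(b\Omega)$ lies in $\what A$; by the Bishop decomposition this reduces to checking $g|_E\in\what A|_E$ for each maximal set of antisymmetry $E$ for $\what A$. By the remark above, $E\subset\pi^{-1}(p)$ for some $p\in\ol\Omega$. If $p\in\Omega$ then $E=\{p\}$ and there is nothing to prove; if $p\in b\Omega$ then $g|_E=0$. Hence $g\in\what A$, and $\E\subset\pi^{-1}(b\Omega)$.

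For the reverse implication, assume $\E\subset\pi^{-1}(b\Omega)$; then $\what A$ contains every member of $C(\ma)$ that vanishes on $\pi^{-1}(b\Omega)$. Fix $h\in C(\ol\Omega)$; I will show $h\circ\pi\in\what A$, which gives $h|_\Omega\in A$ and hence $C(\ol\Omega)\subset A$. By the Bishop decomposition it is enough to verify $(h\circ\pi)|_E\in\what A|_E$ for each maximal set of antisymmetry $E$ for $\what A$. If $E$ is a singleton this is immediate. If $E$ is not a singleton, then, using that $\what A$ contains every continuous function on $\ma$ supported off $\pi^{-1}(b\Omega)$ and that a maximal set of antisymmetry for a uniform algebra on its maximal ideal space is connected \cite[Remarks~12.7]{stout}, one sees that $E$ cannot meet the open set $\Omega$, so $E\subset\pi^{-1}(b\Omega)$. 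It therefore remains to prove $(h\circ\pi)|_E\in\what A|_E$ for the non-singleton maximal sets of antisymmetry $E$, all of which lie over $b\Omega$. This is the crux, and the only place where the hypothesis $A\supset\Ao$ (resp. $A\supset\Hinfty$) is essential: one must use the holomorphic structure of $\Ao$ (resp. $\Hinfty$) near $b\Omega$, together with the abundance of functions in $\what A$ furnished by the essential-set hypothesis, to recapture $h\circ\pi$ on such $E$. In carrying this out I would reduce, in the bounded case, to producing an element of $A$ whose Gelfand transform agrees with $\ol{\what z}$ on $\ma\sm\Omega$, since then $C(\ol\Omega)\circ\pi$, being the closed subalgebra of $C(\ma)$ generated by $\what z$ and $\ol{\what z}$, is contained in $\what A$.

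I expect this last step --- controlling $\what A$ on the maximal sets of antisymmetry lying over $b\Omega$ --- to be the main obstacle; the geometry of $\pi$, the two Bishop reductions, and the entire forward implication are routine uniform-algebra bookkeeping of the kind already carried out above.
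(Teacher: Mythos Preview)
The paper does not contain a proof of this theorem: it is quoted verbatim from \cite{Izzo2003} and followed only by the remark ``When $\Omega$ is bounded, the proof of this theorem is rather easy.  The case of unbounded $\Omega$ is more difficult.''  So there is no proof in the paper to compare against; one must go to \cite{Izzo2003} for the argument.

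That said, your proposal is not a complete proof, and you already say so.  Your forward implication is fine (modulo one expository slip noted below).  For the reverse implication you correctly reduce, via the essential-set hypothesis and connectedness of maximal antisymmetric sets, to the statement that $(h\circ\pi)|_E\in\what A|_E$ for every non-singleton maximal set of antisymmetry $E\subset\pi^{-1}(b\Omega)$.  But you then stop, calling this ``the crux'' and ``the main obstacle,'' and offer only a sketch of a strategy (produce $a\in A$ with $\what a=\ol{\what z}$ on $\ma\sm\Omega$) without showing that such an element exists.  This is precisely the substantive content of the theorem, and the paper's remark that the unbounded case is ``more difficult'' should warn you that the missing step is not a formality; the hypothesis $A\supset\Hinfty$ (rather than merely $A\supset\Ao$) is needed there for genuine reasons having to do with the structure of $\pi_{\Hinfty}$ over $b\Omega$, which your outline does not engage with.

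One further correction: your sentence beginning ``Throughout I use that $\what A$ \ldots\ contains the point-separating subalgebra $C(\ol\Omega)\circ\pi$'' is false as a blanket assertion.  Merely containing the coordinate function does not force $\what A$ to contain $\ol{\what z}$; take $A=\Ao$ on the disc.  The inclusion $C(\ol\Omega)\circ\pi\subset\what A$ holds only under the hypothesis $C(\ol\Omega)\subset A$ of the forward implication, which is in fact the only place you use it, so the logic survives --- but the exposition should not claim it ``throughout.''
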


When $\Omega$ is bounded, the proof of this theorem is rather easy.  The case of unbounded $\Omega$ is more difficult.

\begin{remark}\label{correction}
{\rm While the basic idea of the proof of Theorem~\ref{ST1} given in \cite{ST} is correct, there is an incorrect statement there in the proof of the implication (ii) implies (iii).  (The algebra generated by the functions $\row zn$ is not dense in $H^\infty(B)$ for $B$ an open ball in $\Cn$.)  We therefore repeat the proof of this implication avoiding that error.
Suppose that $\Hfb$ is dense in $L^p(\Omega)$ for some $1\leq p<\infty$.  Then applying \cite[Theorem~4.2 or Lemma~4.3]{ib} yields that for some point $z\in\Omega$ the differentials of the functions in the set $\Hinfty\cup\{\row {\ol f}m\}$ span a $2n$-dimensional vector space (the complexified cotangent space to $\Cn$).  Since the differential of every function in $\Hinfty$ lies in the $n$-dimensional space spanned by $dz_1,\ldots, dz_n$, it follows that the Jacobian of $f=(\row fm)$ has rank $n$ at $z$.}

\end{remark}

\section{Aknowlegement}
We thank Akaki Tikaradze for useful conversations.

\bibliographystyle{amsalpha}
\bibliography{gleason}

\end{document}